\theoremstyle{plain}
\newtheorem{corollary}{Corollary}
\newtheorem{definition}{Definition}
\newtheorem{lemma}{Lemma}
\newtheorem{problem}{Problem}
\newtheorem{proposition}{Proposition}
\newtheorem{remark}{Remark}
\newtheorem{theorem}{Theorem}
\numberwithin{equation}{section}
\newcommand{\bth}{\begin{theorem}}
\newcommand{\ble}{\begin{lemma}}
\newcommand{\bcor}{\begin{corr}}
\newcommand{\bdeff}{\begin{deff}}
\newcommand{\bprop}{\begin{proposition}}
\newcommand{\ele}{\end{lemma}}
\newcommand{\ecor}{\end{corr}}
\newcommand{\edeff}{\end{deff}}
\newcommand{\eprop}{\end{proposition}}
	\newcommand{\Rn}{{\mathbb R}^n}
	\renewcommand{\Pi}{\varPi}
	\renewcommand{\epsilon}{\varepsilon}
	\newcommand{\R}{{\mathbb R}}
\begin{document}
		\title[Hearing the shape of a drum]{Hearing the shape of a drum by knocking around}
		
		\author{Xing Wang}

		\address{Department of Mathematics, Hunan University, Changsha, Hunan 410012, PR China}
		\email{xingwang@hnu.edu.cn}
  
		\author{Emmett L. Wyman}
        \address{Department of Mathematics and Statistics, Binghamton University, Vestal NY}
        \email{ewyman@binghamton.edu}

        \author{Yakun Xi}
        \address{School of Mathematical Sciences, Zhejiang University, Hangzhou, Zhejiang 310027, PR China}
        \email{yakunxi@zju.edu.cn}

		\keywords{}

		\dedicatory{}

		\begin{abstract}
            We study a variation of Kac's question, ``Can one hear the shape of a drum?" if we allow ourselves access to some additional information. In particular, we allow ourselves to ``hear" the local Weyl counting function at each point on the manifold and ask if this is enough to uniquely recover the Riemannian metric. {This is physically equivalent to asking whether one can determine the shape of a drum if one is allowed to knock at any place on the drum.} We show that the answer to this question is ``yes" provided the Laplace-Beltrami spectrum of the drum is simple. We also provide a counterexample illustrating why this hypothesis is necessary.
		\end{abstract}
		
		\maketitle
		
		\section{Introduction}

            In his celebrated paper \cite{Kac66}, Kac asks ``Can one hear the shape of a drum?" In particular, is it possible for a pair of non-isometric Riemannian manifolds (or planar domains) to have identical Laplace-Beltrami spectra?
    
            The right answer to Kac's question depends very much on the context. Some manifolds are spectrally unique, at least among those of a particular class. Amongst planar domains, the disk \cite{Kac66}, and more generally ellipses of small eccentricity \cite{HZ22} are spectrally unique. 
            Moreover, it is known that each planar triangle is spectrally unique amongst the class of planar triangles \cite{durso1990solution, grieser2013hearing}. In a recent paper by Enciso and G{\'o}mez-Serrano \cite{enciso2022spectral}, it is shown that semiregular polygons are spectrally determined amongst the class of convex piecewise smooth planar domains, possibly with straight corners.
            
            However, there is also a rich catalog of negative examples---isospectral pairs of non-isometric manifolds. In an one-page paper, Milnor \cite{Milnor} exhibits such a pair of 16-dimensional tori, answering the general question in the negative. More isospectral pairs of flat tori have been found in as few as four dimensions \cite{schiemann1990beispiel,conway1992four, HeintzeImHof}, and there are provably no such pair of tori in dimensions 2 and 3 \cite{schiemann1990beispiel,schiemann1994ternare}. There are also a large number of isospectral pairs of non-isometric polygons in the plane; see, e.g., \cite{GWW92,BCDS94}.
    
            In \cite{echolocation}, the authors ask a variation on Kac's question: ``Can one hear where a drum is struck?" To describe this problem, we must first define the local Weyl counting function.
	   
		Let $(M, g)$ be a connected compact manifold, consider Laplace-Beltrami eigenfunctions
		$$
		-\Delta_g e_\lambda=\lambda^2 e_\lambda, \quad 0=\lambda_1<\lambda_2 \leq \lambda_3 \leqslant \cdots, \quad\|e_\lambda\|_{L^2(M)}=1 \text {. }
		$$
        { If the boundary $\partial M\neq\emptyset$, we shall assume that the boundary is smooth, and impose the Dirichlet (or Neumann) boundary condition.}
        The Weyl counting function 
            $$
            N(\lambda)=\#\{\lambda_j:\lambda_j\le\lambda\}=\sum_{\lambda_j \leq \lambda} 1,
            $$
        records the Laplacian eigenvalues of $(M,g)$ counting multiplicity.
		Define the local Weyl counting function $N(x, \lambda)$ as
		$$
		N(x, \lambda)=\sum_{\lambda_j \leq \lambda} |e_{\lambda_j}(x)|^2 .
		$$

        In \cite[Section 2]{echolocation}, it is shown that $N(x,{}\cdot{})$ corresponds exactly to the acoustic information generated by a drum when struck at the point $x$. In the same paper, the authors ask if it is possible to determine the location of the point $x$ in $M$, up to symmetry, if we know $N(x,\lambda)$ for all $\lambda$. The answer is ``yes" for some generic class of manifolds. If we do not require $M$ to be connected, there are indeed negative examples, e.g., the disjoint union of Milnor's tori. However, at the time of writing, there are no known negative examples with a single connected component.
            
            The purpose of this paper is to bridge the gap between this question and Kac's. To this end, we ask: ``Can one hear the shape of a drum if one is allowed to knock everywhere on the drum?'' Mathematically, this is:

		\begin{problem}\label{problem 1} 
                Let $M$ be a compact connected manifold equipped with an unknown smooth Riemannian metric $g$. Suppose that the associated local Weyl counting function $N(x,\lambda)$ is known to us for all $x\in M$. Can we determine the metric tensor $g$, and hence the geometry of $M$?
		\end{problem}
        A special case of the above problem is studied in \cite{wang2023surfaces}, where the authors show that if $M$ is a compact boundaryless surface, and the local Weyl counting function $N(x,\lambda)$ is constant in $x\in M$, then $M$ can only be a sphere, a projective sphere, or a flat torus. 
        In this paper, we answer Problem \ref{problem 1} for any manifold with a simple spectrum. Our main result is the following.
            \begin{theorem}\label{main theorem}
                The answer to Problem \ref{problem 1} is ``yes'' for all $(M,g)$ (possibly with a smooth boundary) 
                with a simple spectrum.
            \end{theorem}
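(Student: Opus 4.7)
The plan is to reconstruct the off-diagonal heat kernel from the data $\{N(x,\lambda)\}$ and then apply Varadhan's formula
\[
d_g(x,y)^2 = -\lim_{t\to 0^+} 4t\,\log k(t,x,y)
\]
to recover the Riemannian distance function, which determines $g$ on the given smooth manifold $M$ (via, e.g., Taylor expansion of $d_g(x,\cdot)^2$ at $x$). The first step is spectral decoding: under the simple-spectrum hypothesis, the function $\lambda\mapsto N(x,\lambda)$ is a step function jumping by $|e_{\lambda_j}(x)|^2$ at each $\lambda_j$. Taking the union over $x$ of the jump locations recovers the full spectrum (every $e_{\lambda_j}$ is nontrivial somewhere), and the jump heights recover $x\mapsto|e_{\lambda_j}(x)|^2$ pointwise for each $j$.

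The core of the argument is to recover each eigenfunction $e_{\lambda_j}$ up to a global sign from $|e_{\lambda_j}|^2$. Setting $f=e_{\lambda_j}$, the key claim is that any $\tilde f\in C^\infty(M)$ with $\tilde f^2=f^2$ must coincide with $\pm f$ globally. On each nodal domain (component of $\{f\ne 0\}$), the ratio $\tilde f/f\in\{\pm 1\}$ is constant, so the task reduces to showing the constant agrees across adjacent nodal domains $U_+, U_-$. Suppose not, and let $\gamma$ be a smooth curve crossing $\overline{U_+}\cap\overline{U_-}\cap\{f=0\}$ at $\gamma(0)$ with $\gamma(t)\in U_\pm$ for $\pm t>0$. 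Along $\gamma$, $F(t):=f(\gamma(t))$ is smooth, changes sign at $0$, and vanishes to some finite order $k$—here one uses real-analyticity of $f$ when $g$ is real-analytic, or an Aronszajn-type strong unique continuation argument more generally—so $F(t)=c\,t^k+\mathcal O(t^{k+1})$ with $c\ne 0$ and $k$ odd. The candidate $\tilde f\circ\gamma$ equals $F$ for $t>0$ and $-F$ for $t<0$, so near $0$ it behaves like $|c|\,|t|^k$, which is not smooth for odd $k$. This contradicts smoothness of $\tilde f$, proving the claim. Since $M$ is connected, the adjacency graph of nodal domains is connected, so $\tilde f/f$ is a single global $\pm 1$, giving $\tilde f=\pm f$.

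The product $e_{\lambda_j}(x)\,e_{\lambda_j}(y)$ is invariant under individual sign flips, so the heat kernel $k(t,x,y)=\sum_j e^{-t\lambda_j^2}e_{\lambda_j}(x)e_{\lambda_j}(y)$ is intrinsically determined by the data; Varadhan's formula (applied at interior points) then yields $d_g$ and hence $g$. The main obstacle is the sign-recovery step, specifically establishing the finite-order vanishing of $f$ along curves crossing its nodal set for merely smooth metrics; this is straightforward in the analytic category and may be reduced to it by approximation, or handled via strong unique continuation. Boundary cases are a secondary concern: for Dirichlet conditions $\partial M\subseteq\{f=0\}$ but $\partial M$ is one-sided within $M$, so the interior argument plus smoothness up to the boundary suffices; for Neumann conditions $f$ generically does not vanish on $\partial M$, and the interior argument extends to a collar.
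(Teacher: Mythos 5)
Your proposal is correct in outline but takes a genuinely different route from the paper in its second half. The overlap is Step 1: both arguments must recover each $e_{\lambda_j}$ up to a global sign from $|e_{\lambda_j}|^2$, and both reduce this to consistency of the sign choice across adjacent nodal domains plus connectivity of the adjacency graph. The paper proves the key lemma (adjacent nodal domains carry opposite signs) by extending $e_\lambda$ to the harmonic function $e^{\lambda t}e_\lambda(x)$ and invoking the maximum principle, then assigns signs by a finite propagation; you instead prove uniqueness of smooth square roots of $e_{\lambda_j}^2$ via finite vanishing order along a transversal curve. Two remarks on your version: (i) your assertions that $F$ ``changes sign'' and that $k$ is odd are exactly the content of the paper's maximum-principle lemma and are not justified in your write-up --- but your argument does not actually need them, since the one-sided $k$-th derivatives of $\tilde f\circ\gamma$ at $0$ are $\pm c\,k!$ and differ regardless of the parity of $k$; (ii) the finite-order vanishing for merely smooth metrics is indeed supplied by Aronszajn's strong unique continuation theorem, or can be sidestepped by working at a regular point of the nodal set on the common interface, where $k=1$. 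After Step 1 the approaches diverge entirely: the paper recovers the volume form by Gram--Schmidt against a reference metric and then extracts $g^{ij}$ by applying the spectral representation of $-\Delta_g$ to oscillatory test functions $e^{\lambda v\cdot(x-x_0)}\phi(x)$, whereas you assemble the heat kernel $\sum_j e^{-t\lambda_j^2}e_{\lambda_j}(x)e_{\lambda_j}(y)$ (well defined since the products are sign-invariant and the data already gives the $L^2$-normalized eigenfunctions) and invoke Varadhan's formula to recover $d_g$ and hence $g$. Your route is more conceptual and skips the separate volume-form step, at the cost of relying on a deeper black box (Varadhan's asymptotics, including its boundary versions for the Dirichlet/Neumann heat kernels); the paper's route is more elementary and self-contained. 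Both are valid proofs of the theorem.
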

            We shall prove Theorem \ref{main theorem} in Section \ref{section: proof} by providing a three step procedure that allows us to recover the metric tensor from $N(x,\lambda)$.
           
            Even though the simplicity condition on the spectrum might seem too restrictive, we shall show in Section \ref{section counterexample} that this condition is necessary in the sense that there exists a pair of non-isometric manifolds with identical local Weyl counting functions.
  
            \begin{proposition}\label{counterexample}
                Fix an integer $d\ge4$. There exists a pair of non-isometric flat tori of dimension $d$, such that their local Weyl functions are identical everywhere.
            \end{proposition}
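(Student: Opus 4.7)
The plan is to reduce Proposition \ref{counterexample} to the known existence of isospectral non-isometric flat tori. The key observation is that on a flat torus, the local Weyl counting function is completely determined by the global spectrum and the volume.

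First I would spell out the structure of the eigenfunctions on a flat torus $\mathbb{T}_\Lambda = \mathbb{R}^d/\Lambda$. The Laplace--Beltrami eigenfunctions are the characters $e_\xi(x) = |\mathbb{T}_\Lambda|^{-1/2} e^{2\pi i \langle \xi, x\rangle}$ indexed by $\xi$ in the dual lattice $\Lambda^*$, with corresponding eigenvalue $4\pi^2|\xi|^2$. The crucial point is that $|e_\xi(x)|^2 = |\mathbb{T}_\Lambda|^{-1}$ is a constant, independent of both $\xi$ and $x$. Summing over $\xi$ with $2\pi|\xi| \le \lambda$ gives
\[
N(x,\lambda) \;=\; \frac{1}{|\mathbb{T}_\Lambda|}\,N(\lambda),
\]
so the local Weyl counting function of a flat torus is constant in $x$ and is determined by the spectrum together with the volume.

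Next I would invoke Weyl's law. If two flat tori $\mathbb{T}_{\Lambda_1}$ and $\mathbb{T}_{\Lambda_2}$ are isospectral, then $N_1(\lambda)=N_2(\lambda)$, and the leading asymptotic $N_i(\lambda)\sim c_d |\mathbb{T}_{\Lambda_i}|\lambda^d$ forces $|\mathbb{T}_{\Lambda_1}|=|\mathbb{T}_{\Lambda_2}|$. Combined with the previous display, this yields $N_1(x,\lambda) = N_2(y,\lambda)$ for every $x,y$ and every $\lambda$, which is precisely the condition in the proposition.

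It then suffices to exhibit, for each $d\ge 4$, a pair of non-isometric isospectral flat tori. For $d\ge 4$ this is well known: Schiemann \cite{schiemann1990beispiel} and Conway--Sloane \cite{conway1992four} constructed such pairs in dimension $4$, and Milnor's original example \cite{Milnor} handles $d=16$. For the remaining $d>4$ one can take the orthogonal product of a four-dimensional Schiemann pair $(\mathbb{T}_{\Lambda_1},\mathbb{T}_{\Lambda_2})$ with a fixed flat torus $\mathbb{T}_{\Lambda_0}$ of dimension $d-4$: the spectra add under products, so the resulting $d$-dimensional tori remain isospectral, and I would verify non-isometry by comparing the sets of squared lengths of primitive lattice vectors (or, equivalently, by observing that an isometry of the products would induce an isometry of the four-dimensional factors once one splits off the common $\Lambda_0$-directions using the shortest-vector structure). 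The only genuinely delicate step is this cancellation argument for non-isometry in higher dimensions; everything else is essentially bookkeeping around the fact that flat-torus eigenfunctions have constant modulus.
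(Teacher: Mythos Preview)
Your proof is correct and matches the paper's approach: both establish that $N(x,\lambda)=N(\lambda)/\mathrm{Vol}$ on a flat torus (you via the constant modulus of the characters, the paper via transitivity of the isometry group), invoke Weyl's law to equate the volumes, and then appeal to the known existence of isospectral non-isometric flat tori in dimensions $d\ge 4$. The one extraneous step is your product construction for $d>4$: the paper simply cites the literature for existence in every such dimension, so you can avoid the somewhat delicate non-isometry verification for $\Lambda_i\oplus\Lambda_0$ that you flag at the end.
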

           Furthermore, Uhlenbeck \cite{Uhlenbeck} proved that for a Baire-generic class of metrics, all Laplace--Beltrami eigenvalues are simple. This fact leads directly to the following corollary of Theorem \ref{main theorem}.

           \begin{corollary}
             Given a compact smooth manifold $M$. There exists a residual class of metrics $\mathcal G_0$ in the $C^\infty$ topology such that the answer to Problem \ref{problem 1} is ``yes'' for any $(M,g),\ g\in\mathcal G_0$.
           \end{corollary}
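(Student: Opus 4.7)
The plan is to combine Theorem \ref{main theorem} with Uhlenbeck's genericity result in essentially a one-line argument; the bulk of the work has already been done. First, I would recall Uhlenbeck's theorem from \cite{Uhlenbeck}: on a fixed compact smooth manifold $M$ (with or without smooth boundary, in the latter case with Dirichlet or Neumann conditions fixed), the set
\[
\mathcal{G}_0 = \{\, g \in \mathrm{Met}^\infty(M) : \text{every Laplace--Beltrami eigenvalue of } (M,g) \text{ is simple}\,\}
\]
is residual in the $C^\infty$ topology on the space of smooth Riemannian metrics, i.e., it contains a countable intersection of open dense sets.

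Next, I would simply apply Theorem \ref{main theorem} pointwise in $g$. For any $g \in \mathcal{G}_0$, the spectrum of $-\Delta_g$ is simple by definition of $\mathcal{G}_0$, so the hypothesis of Theorem \ref{main theorem} is met, and therefore the local Weyl counting function $N_g(x,\lambda)$, known for all $(x,\lambda) \in M \times [0,\infty)$, determines $g$ uniquely. This gives the desired conclusion.

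The main (and essentially only) subtlety worth flagging is to ensure that the version of Uhlenbeck's theorem one cites applies in the same setting as Theorem \ref{main theorem}, namely compact manifolds possibly with smooth boundary under Dirichlet or Neumann conditions; this is covered by \cite{Uhlenbeck} (the boundary case following by the same transversality arguments, with the space of metrics in a fixed conformal class near the boundary if one wants to keep the boundary smooth). No other obstacle arises: the corollary is a direct consequence of Theorem \ref{main theorem} applied on the residual set $\mathcal{G}_0$ produced by Uhlenbeck's theorem.
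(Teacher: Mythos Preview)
Your proposal is correct and matches the paper's approach exactly: the paper simply cites Uhlenbeck's result that metrics with simple Laplace--Beltrami spectrum form a residual set and notes that Theorem \ref{main theorem} then applies directly. Your added remark about the boundary case is a reasonable caution, but the paper does not dwell on it either.
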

        \subsection*{Acknowledgements}Y. X. is supported by the National Key Research and Development Program of China No. 2022YFA1007200 and NSF China Grant No. 12171424. X.W. is partially supported by grant No. 531118010864 from Hunan University. E.L.W. is partially supported by NSF grant DMS-2204397.
		
		\section{Preliminaries}
          
		In this section, we recall some definitions and review basic geometric and quantitative properties of eigenfunctions.
		
		\begin{definition}
			Suppose $e_\lambda$ is an eigenfunction, then its nodal set is defined to be its zero set 
			$$
			Z_{e_\lambda} = \{x\in M:e_\lambda(x)=0\}.
			$$
			The nodal domains are defined to be the connected components of $M\backslash Z_{e_\lambda}$.
		\end{definition}
		The famous Courant nodal domain theorem states that when $M$ is a compact surface, the number of nodal domains of the $k\text{th}$ eigenfunction will never exceed $k$. Cheng generalizes this theorem to higher dimensions by studying the regularity of nodal sets (see \cite{N1,N2}). Furthermore, Cheng shows that the nodal set forms a $(n-1)$-dimensional $C^\infty$ submanifold if one excludes a closed set of lower dimensions.
		
		An asymptotic formula about $N(x,\lambda)$ was obtained for paracompact manifolds by H\"omander in \cite{Ho}. As a by-product, one gets the following $L^\infty$ bound about eigenfunctions
		$$
		\|e_\lambda\|_{L^\infty} \le C \lambda^{\frac{n-1}{2}}.
		$$
		Here $C$ is a constant that does not depend on $\lambda$. The sharp $L^p$ bound for all $p\in[2,\infty]$ was obtained by Sogge \cite{So}. The same $L^\infty$ bound was proved by Grieser \cite{grieser2002uniform} for manifolds with boundary.

		\section{Proof of Theorem \ref{main theorem}}\label{section: proof}
In this section, we introduce a three-step procedure that can be used to recover the metric tensor for a compact manifold (possibly with a smooth boundary) with only simple eigenvalues.
		
		\subsection{Step 1. Recovering eigenfunctions} First, we find all eigenvalues $\{\lambda_i\}$ by looking at all jumps of $N(x, \lambda)$ as functions of $\lambda$. Since all eigenvalues are simple, we can find the corresponding absolute values of eigenfunctions $|e_{\lambda_i}(x)|$ for each eigenvalue.
        
        Let  
            $$
                N(x, \lambda_i^{-}) =\lim_{\lambda\to\lambda_i}N(x,\lambda)
            $$
        and
            $$
                E_{\lambda_i}(x) = N(x, \lambda_i)-N(x, \lambda_i^{-}) = |e_{\lambda_i}(x)|^2.
            $$
        We show one can recover $e_{\lambda_i}(x)$ from $E_{\lambda_i}(x)$ with a possible sign change. 
        
        Let $D_1,D_2,\cdots,D_m$ denote the Nodal domains of eigenfunction $e_{\lambda}$. We say two different nodal domains $D_i, D_j$ are adjacent if $D_i\cap D_j$ has non-zero $n-1$ dimensional Hausdorff measure.
		
		Now, we show that $e_\lambda$ has different signs in adjacent nodal domains $D_i, D_j$. This can be done by the method of proof by contradiction. Assume that $e_\lambda(x)$ is non-negative on both $D_i$ and $D_j$. Let $\Sigma=D_i\cap D_j$, then $\Sigma$ is a piecewise smooth submanifold of dimension $n-1$. Choose a smooth piece $\Sigma_0\subset\Sigma$, and let $x_0\in\Sigma_0$ be an interior point. Then we can find an open neighborhood $U$ of $x_0$ on $M$ such that $U\subset D_i\cup D_j$. Let
		$$u(t,x) = e^{\lambda t} e_\lambda(x) \text{ on }[0,1] \times U.$$
		It is easy to check that $u(t,x)$ is harmonic on $[0,1] \times U\textbf{}$, and it has an interior minimal point, which contradicts the maximal principle of a harmonic function.
		
		Now let $\tilde{e}_{\lambda_i}(x)=\delta_j\sqrt{E_{\lambda_i}}(x)$ on $D_j$, where the sign function $\delta_j=\pm 1$ will be chosen by a finite iteration procedure. Let $\delta_1=1$ and $S(0) = \{1\}$, and let 
        \begin{multline*}
        S(l) = \{j:\delta_j \text{ is asigned a value at $l$th iteration cycle}\}
        \subset T_m=\{1,2,\cdots,m\}.  
        \end{multline*}		
		Then, at cycle $l+1$, for each $k\in T_m$ such that there is a $D_k$ is adjacent to $D_j$ for some $j\in S$, we assign $\delta_k=-\delta_j$. 
		
		We claim that for some finite $l>0$, we have $S(l)=T_m$. Indeed, if this is not the case, we will be able to split $ D_j$ into two groups, with unions $U_1$ and $U_2$ respectively, such that $U_1\cap U_2$ is $(n-2)$-dimensional. Then, $U_1\cap U_2$ as a compact set separates the $n$ dimensional connected manifold $M$ into two connected components, so it must have dimension at least $n-1$, a contradiction.
		
		\subsection{Step 2. Recovering the volume form} Endow $M$ with an arbitrary Riemannian metric $\tilde{g}$, then our desired volume form is given by some function $\mu$, such that 
          $$
          dV_g=\mu dV_{\tilde{g}}.
          $$ Since 
		$$\quad \int_M e_{\lambda_i}e_{\lambda_1} d V_g = \delta_{i,1},
		$$
		we then have
			$$\quad \int_M e_{\lambda_i}(x) \tilde{\mu}(x)d V_{\tilde{g}}= 
		\delta_{i,1}.
		$$
        where $\tilde{\mu}(x)=e_{\lambda_1}(x)\mu(x)$
		
		Since $\{e_{\lambda_i}\}$ form a basis of $L^2(M, dV_g)$,  they are also a basis of $L^2(M, dV_{\tilde{g}})$ thanks to the fact that the measures are comparable $dV_g \sim dV_{\tilde{g}}$.
		
		Applying the Gram-Schmidt process with respect to $dV_{\tilde{g}}$, we can find coefficients $\{a_{kj}\}$ such that
		$$
		\phi_k=\sum_{j=1}^k a_{k j} e_{\lambda_j}, \qquad k=1,2, \ldots
		$$
		form an orthonormal basis of $L^2(M, dV_{\tilde{g}})$.
		
		Now 
		$$\langle\phi_k, \tilde{\mu}\rangle_{\tilde{g}}=\sum_{j=1}^k a_{k j}\langle e_{\lambda_j}, \tilde{\mu}\rangle_{\tilde{g}}=a_{k 1} $$
		which gives us
		$$
		\quad \tilde{\mu}=\sum_{k=1}^{\infty} a_{k 1} \phi_k(x).
		$$
        Then
        $$
		\mu(x)=[e_{\lambda_1}(x)]^{-1}\tilde{\mu} \text { for }x \notin Z_{e_{\lambda_1}}.
		$$
        Noting that $Z_{e_{\lambda_1}}$ is at most $(n-1)$-dimensional {(and usually empty by Courant's nodal domain theorem)}, the equation above uniquely determines $\mu$ as a continuous function on $M$. Therefore, by $dV_g=\mu dV_{\tilde{g}}$, we recover the volume form $dV_g$.
		
		\subsection{Step 3. Recovering the metric} Now we have recovered $\{e_{\lambda_i}\}$ and $dV_g$. By the spectral theorem, for $f\in C^\infty (M)$, we get
		$$
		-\Delta_g f= \lim_{\lambda\to\infty}\sum_{\lambda_j\leq \lambda} \lambda_j^2 \int_M e_{\lambda_j}(x) e_{\lambda_j}(y) f(y) dV_g.
		$$
		The limit is always well defined since the Fourier coefficients of $f$ with respect to $\{e_{\lambda_i}\}$ decay rapidly and $\|e_{\lambda}\|_{\infty}\le C \lambda^{\frac{n-1}{2}}$. For any interior point $p\in M$, we choose an open neighborhood $U$ contained in the interior of $M$. Let $x_0$ be the local coordinates of $p$. Take a smooth bump function $\phi\in C_c^\infty(U)$ such that $\phi(x_0)=1$. For any $v\in \Rn$, let $f_v(x;\lambda)=e^{\lambda v\cdot (x-x_0)}\phi(x)$. Then
		$$
		\lim_{\lambda\to\infty} -\lambda^{-2}\Delta_g f_v(x_0;\lambda)= g^{ij}(x_0)v_iv_j.
		$$
		gives the metric tensor in the interior of $M$, which extends uniquely to the boundary by continuity. 
		
		\begin{remark}
            Indeed, we do not need to know whether the spectrum of the manifold is simple or not in advance. Basically we can execute the above procedure for any given smooth manifold to produce a metric on it, and then check whether the resulting Riemannian manifold give the desired local Weyl counting function or not.
        \end{remark}
		
		\section{Proof of Proposition \ref{counterexample}}\label{section counterexample}
  In this section, we show that if we drop the assumption that the manifold has a  simple spectrum, then there exist pairs of manifolds that one cannot distinguish by knocking around. Milnor found two 16-dimensional flat tori which are isospectral but not isometric. Since then, isospectral pairs of flat tori have been found in any dimensions greater or equal to four \cite{schiemann1990beispiel,conway1992four, HeintzeImHof}. Furthermore, Schiemann showed that there is no such pair of tori in dimensions three and lower \cite{schiemann1990beispiel,schiemann1994ternare}. See the survey article \cite{nilsson2023isospectral} for a detailed discussion. 

Now we are ready to prove Proposition \ref{counterexample}. Take two non-isometric, isospectral flat tori of dimension $d\ge4$, say $\mathbb T$ and $\mathbb T'$, with Weyl counting functions $N_{\mathbb T}(\lambda)$ and $N_{\mathbb T'}(\lambda)$ respectively.

Since they are isospectral, their Weyl counting functions are identical 
\[N_{\mathbb T}(\lambda)=N_{\mathbb T'}(\lambda),\quad \forall \lambda\in\mathbb R.\]
On the other hand, since $\mathbb T$ is a flat torus,
its isometry group acts transitively on points in $\mathbb T$. The local Weyl counting function then satisfies
\[N_{\mathbb T}(x,\lambda)=f(\lambda),\quad \forall x\in\mathbb T.\]
Similarly, 
\[N_{\mathbb T'}(x,\lambda)=g(\lambda),\quad \forall x\in\mathbb T'.\]
Note that 
\[{\rm Vol}(\mathbb T)f(\lambda) = \int_\mathbb TN_{\mathbb T}(x,\lambda)dx=N_{\mathbb T}(\lambda)=N_{\mathbb T'}(\lambda)=\int_{\mathbb T'}N_{\mathbb T'}(x,\lambda)dx={\rm Vol}(\mathbb T')g(\lambda).
\]
Noting that by the Weyl law, ${\rm Vol}(\mathbb T')={\rm Vol}(\mathbb T)$, we see that $N_{\mathbb T}(x,\lambda)=f(\lambda)$ and $N_{\mathbb T'}(x,\lambda)=g(\lambda)$ must be the same function of $\lambda$. That is,
\[N_{\mathbb T}(x,\lambda)=N_{\mathbb T'}(y,\lambda)=f(\lambda),\quad \forall x\in\mathbb T, \ \forall y\in\mathbb T'.\]

This is an example of a pair of non-isometric manifolds with the same topology and differential structure which cannot be distinguished by looking at their local Weyl counting functions alone. Incidentally, their spectra are not simple.
		
	\section{Obstructions for the General Case in Dimensions Two and Three}

        Note that the counterexamples to Problem \ref{problem 1} discussed in the previous section only appear in dimensions four or higher. An interesting question is whether one can fully solve Problem \ref{problem 1} in dimensions two or three. This question appears intriguing yet difficult. In many cases, when the spectrum is not simple, it will be impossible to uniquely determine the local expression of the metric. We shall demonstrate this fact with the following example.
        
        \begin{proposition}
        There exists a smooth manifold $M$ which admits two distinct metrics $g_1$ and $g_2$ such that $N(x,\lambda)$ is identical at each point, but $(M,g_1)$ and $(M,g_2)$ are nevertheless isometric.
        \end{proposition}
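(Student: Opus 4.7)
The plan is to construct such a manifold by pulling back a highly symmetric metric via a non-isometric diffeomorphism, exploiting the observation made in the proof of Proposition \ref{counterexample}: on a Riemannian homogeneous space the local Weyl counting function depends only on $\lambda$, not on the base point. Concretely, I would take $M = S^n$ (or any compact Riemannian homogeneous space, e.g. a flat torus) with $g_1$ its standard homogeneous metric, fix a diffeomorphism $\phi:M\to M$ that is \emph{not} an isometry of $g_1$ (such diffeomorphisms exist in abundance, since the isometry group $\mathrm{Isom}(M,g_1)$ is a finite-dimensional Lie group and therefore a proper subgroup of $\mathrm{Diff}(M)$), and set $g_2 = \phi^* g_1$.

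Verifying the three required properties is then quick. First, $g_2 \neq g_1$ because $\phi$ was chosen not to be an isometry of $g_1$. Second, by construction $\phi:(M,g_2)\to(M,g_1)$ is an isometry, so $(M,g_1)$ and $(M,g_2)$ are isometric as abstract Riemannian manifolds. Third, the pullbacks $e_j\circ\phi$ of the $L^2(M,dV_{g_1})$-normalized eigenfunctions of $-\Delta_{g_1}$ are exactly the $L^2(M,dV_{g_2})$-normalized eigenfunctions of $-\Delta_{g_2}$ with the same eigenvalues, which gives the intertwining identity
\[
N_{g_2}(x,\lambda)\;=\;N_{g_1}(\phi(x),\lambda).
\]
By homogeneity of $(M,g_1)$, the right-hand side equals $f(\lambda)$, independently of $x$; the same homogeneity (applied to $g_1$ directly) gives $N_{g_1}(x,\lambda)=f(\lambda)$. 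Hence $N_{g_1}(x,\lambda)=N_{g_2}(x,\lambda)$ for every $x\in M$ and every $\lambda$.

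There is essentially no hard step: the whole construction rests on the single observation that a point-independent $N(x,\lambda)$ is automatically preserved under every diffeomorphism, combined with the standard intertwining between the Laplacians of a metric and its pullback. The only thing to be mildly careful about is to exhibit a concrete $\phi\in\mathrm{Diff}(S^n)\setminus\mathrm{Isom}(S^n,g_{\mathrm{round}})$, but this is immediate (for instance, any compactly supported perturbation of the identity in a local chart produces such a $\phi$, and then $g_2=\phi^*g_1$ genuinely differs from $g_1$ on the support of the perturbation). This also clarifies why the statement emphasizes that $g_1$ and $g_2$ are \emph{distinct as metrics} rather than inequivalent: the obstruction to fully solving Problem \ref{problem 1} in low dimensions is not a failure of the isometry class to be determined, but only a failure to pin down the specific tensor $g$ in a fixed coordinate system.
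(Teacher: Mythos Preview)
Your proof is correct and follows essentially the same approach as the paper: pull back a homogeneous metric by a diffeomorphism that is not an isometry, and use the point-independence of $N(x,\lambda)$ on homogeneous spaces to conclude. The paper simply writes down one concrete instance of your construction, taking $M=\mathbb{T}^2=\mathbb{R}^2/\mathbb{Z}^2$, $g_1$ the standard flat metric, and $\phi$ the shear $x\mapsto\begin{pmatrix}1&1\\0&1\end{pmatrix}x$, so that $g_2=\phi^*g_1=\begin{pmatrix}1&1\\1&2\end{pmatrix}$; your version is the same argument stated in greater generality and with the verification of $N_{g_1}=N_{g_2}$ made explicit.
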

        
        \begin{proof}
        We prove the proposition by providing an explicit simple example.

            Let $M = {\mathbb T}^2 = \R^2/{\mathbb Z}^2$. Take
            \[
                g_1 = \begin{bmatrix}
                    1 & 0 \\
                    0 & 1
                \end{bmatrix}
                \qquad \text{ and } \qquad
                g_2 = \begin{bmatrix}
                    1 & 1 \\
                    1 & 2
                \end{bmatrix}.
            \]
            The mapping ${\mathbb T}^2 \to {\mathbb T}^2$ induced by the linear shear
            \[
                Tx = \begin{bmatrix}
                    1 & 1 \\
                    0 & 1
                \end{bmatrix} x
            \]
            is an isometry $({\mathbb T}^2, g_2) \to ({\mathbb T}^2, g_1)$.
            \end{proof}

    	We remark that our proof of Theorem \ref{main theorem} indicates that the above phenomenon cannot occur when the spectrum is simple.
    
        For manifolds with boundary, we can construct similar counterexamples. For instance, let $D$ be the unit disk in the plane. In polar coordinates $(r,\theta)$, we can construct a vector field $X = \phi(r) \partial_\theta$, where $\phi \in C^\infty(\mathbb{R})$ satisfies $\phi(r) \equiv 0$ for $r \leq \frac{1}{4}$ and $\phi(r) \equiv 1$ for $r \geq \frac{1}{2}$.
        
        Let $\Phi_t$ be the flow generated by $X$, and let $g_0$ be the standard Euclidean metric on $D$. Then for $g_1 = \Phi_t^* g_0$, one can see that $(D, g_0)$ and $(D, g_1)$ are isometric and share the same $N(x, \lambda)$ since it is radial, while $g_0$ and $g_1$ are quite different in local coordinates.

\bibliography{reference}{}
\bibliographystyle{alpha}

	\end{document}